\begin{document}

%
%

\def\labelenumi{(\theenumi)}

\newtheorem{thm}{Theorem}[section]
\newtheorem{lem}[thm]{Lemma}
\newtheorem{conj}[thm]{Conjecture}
\newtheorem{cor}[thm]{Corollary}
\newtheorem{add}[thm]{Addendum}
\newtheorem{prop}[thm]{Proposition}
\theoremstyle{definition}
\newtheorem{defn}[thm]{Definition}
\theoremstyle{remark}
\newtheorem{rmk}[thm]{Remark}
\newtheorem{example}[thm]{{\bf Example}}
\newtheorem{Question}[thm]{{ \bf Question}}

\newcommand{\SLTwoC}{\mathrm{SL}(2,{\mathbb C})}
\newcommand{\SLTwoR}{\mathrm{SL}(2,{\mathbb R})}
\newcommand{\SUTwo}{\mathrm{SU}(2)}
\newcommand{\PSLTwoC}{\mathrm{PSL}(2,{\mathbb C})}
\newcommand{\GLTwoZ}{\mathrm{GL}(2,{\mathbb Z})}
\newcommand{\PGLTwoZ}{\mathrm{PGL}(2,{\mathbb Z})}
\newcommand{\GLTwoC}{\mathrm{GL}(2,{\mathbb C})}
\newcommand{\PSLTwoR}{\mathrm{PSL}(2,{\mathbb R})}
\newcommand{\PGLTwoR}{\mathrm{PGL}(2,{\mathbb R})}
\newcommand{\GLTwoR}{\mathrm{GL}(2,{\mathbb R})}
\newcommand{\PSLTwoZ}{\mathrm{PSL}(2,{\mathbb Z})}
\newcommand{\SLTwoZ}{\mathrm{SL}(2,{\mathbb Z})}
\newcommand{\nnn}{\noindent}

\newcommand{\bb}{{\mathbf{b}}}
\newcommand{\dd}{{\mathbf{d}}}
\newcommand{\cc}{{\mathbf{c}}}
\newcommand{\ff}{{\mathbf{f}}}
\renewcommand{\gg}{{\mathbf{g}}}
\newcommand{\nn}{{\mathbf n}}
\newcommand{\qq}{{\mathbf q}}
\newcommand{\uu}{{\mathbf u}}
\newcommand{\vv}{{\mathbf v}}
\newcommand{\ww}{{\mathbf w}}
\newcommand{\pp}{{\mathbf p}}
\newcommand{\HH}{{\mathbb H}}
\newcommand{\EE}{{\mathbb E}}
\newcommand{\CC}{{\mathbb C}}
\newcommand{\RR}{{\mathbb R}}
\newcommand{\QQ}{{\mathbb Q}}
\newcommand{\ZZ}{{\mathbb Z}}
\newcommand{\tr}{{\rm tr\, }}

\def\square{\hfill${\vcenter{\vbox{\hrule height.4pt \hbox{\vrule width.4pt
height7pt \kern7pt \vrule width.4pt} \hrule height.4pt}}}$}

\newenvironment{pf}{\noindent {\sl Proof.}\quad}{\square \vskip 12pt}

\title[Continuous expansions in Euclidean space]{On continuous expansions of configurations of points  in Euclidean space}

\author[H. Cheng, S. P. Tan, Y. Zheng]{Holun Cheng, Ser Peow Tan and Yidan Zheng}
\address{Department of Computer Science \\ National University of Singapore \\
 Singapore }
\email{hcheng@comp.nus.edu}%
\address{Department of Mathematics \\ National University of Singapore  \\ Singapore 119076 }
\email{mattansp@nus.edu.sg}%
\address{Department of Mathematics \\ National University of Singapore  \\ Singapore 119076 }
\email{lindelof.z@gmail.com}%

\subjclass[2000]{51M16; 52A25; 51M25; 52A20  }

\keywords{regular simplex, expanding configuration, continuous expansion}

\thanks{ The
second author is partially supported by the National University of
Singapore academic research grant R-146-000-133-112}

%
%

\begin{abstract}
For any two configurations of ordered points $\pp=(\pp_{1},\cdots,\pp_{N})$
and $\qq=(\qq_{1},\cdots,\qq_{N})$ in Euclidean space $\EE^d$ such that
 $\qq$ is an expansion of $\pp$, there exists a continuous expansion from $\pp$ to $\qq$ in dimension $2d$;  Bezdek and Connelly used this to prove the Kneser-Poulsen conjecture for the planar case. In this paper, we show that this construction is optimal in the sense that for any $d \ge 2$ there exists configurations of $(d+1)^2$ points
$\pp$ and $\qq$ in $\EE^d$ such that $\qq$ is an expansion of $\pp$ but there is no continuous expansion from $\pp$ to $\qq$ in dimension less than $2d$. The techniques used in our proof are completely elementary.

\end{abstract}

\maketitle


\section{Introduction and statement of results.}\label{s:intro}
Let $\EE^d$ be the Euclidean space of dimension $d \ge 2$, where we identify and represent the points of $\EE^d$ by their position vectors. $\EE^d$ is endowed with the standard inner product $\uu\cdot \vv$ and norm $|\uu|=\sqrt{\uu.\uu}$.

Suppose that $d<f$, then $\EE^f \cong \EE^d \times \EE^{f-d}$ and we have  the standard projections $\pi_1: \EE^f \rightarrow \EE^d$ and $\pi_2: \EE^f \rightarrow \EE^{f-d}$ given by
$$\pi_1(u_1,\ldots,u_f)=(u_1, \ldots,u_d), \qquad \pi_2(u_1,\ldots,u_f)=(u_{d+1}, \ldots,u_{f}),$$
and the standard inclusion  $\iota: \EE^d \rightarrow \EE^f$ given by $$\iota(\uu)=\iota(u_1,\ldots,u_d)=(u_1,\ldots,u_d,0\ldots,0).$$
Note that $\pi_1 \circ \iota=id$ on $\EE^d$,  and for $\uu, \vv \in \EE^f$,
\begin{eqnarray}
    \uu &=& (\pi_1(\uu), \pi_2(\uu)), \\
  \uu \cdot\vv &=& \pi_1(\uu)\cdot \pi_1(\vv)+\pi_2(\uu)\cdot \pi_2(\vv), \label{eqn:dotprod}\\
 | \uu|^2 &=& | \pi_1(\uu)|^2+| \pi_2(\uu) |^2. \label{eqn:norms}
\end{eqnarray}

\medskip

Let $\pp=(\pp_{1},\cdots,\pp_{N})$ and $\qq=(\qq_{1},\cdots,\qq_{N})$ be two
configurations of $N$ ordered points in $\EE^{d}$, where $\pp_{i},\qq_{i}\in\EE^{d}$
for $i=1,\ldots,N$, and suppose $f > d$.
\begin{defn}(Expansions in $\EE^d$)
$\qq$ is an expansion of $\pp$ if  $$| \pp_i-\pp_j|\le| \qq_i-\qq_j|, \qquad 1 \le i<j \le N.$$
\end{defn}

\begin{defn}(Continuous expansions in $\EE^f$)
We say that there is a continuous expansion from $\pp$ to $\qq$ in $\EE^f$ if there exists a family of continuous functions (\emph{continuous motions})
$$\ff_i:[0,1]\longrightarrow\mathbb{E}^{f}, \quad i=1, \ldots N$$
such that for $1 \le i <j \le N$ and $0 \le t_1<t_2 \le 1$,
\begin{enumerate}
\item $\ff_i(0)=\iota(\pp_i)$, $\ff_i(1)=\iota(\qq_i)$;
\item $| \ff_{i}(t_1)-\ff_{j}(t_1)| \le | \ff_{i}(t_2)-\ff_{j}(t_2)|$.

\end{enumerate}
\end{defn}

 Note that if there is a continuous expansion from $\pp$ to $\qq$ in $\EE^f$,
then $\qq$ is necessarily an expansion of $\pp$ in $\EE^d$, but an expansion may not admit a continuous expansion in the same or a higher dimension. The following result by R. Alexander \cite{Alex} shows that any expansion admits a continuous expansion in twice the dimension.

\begin{thm}\label{thm:continuous}\cite{Alex}, see also \cite{BezCon}.
Suppose that $\qq=(\qq_1, \ldots \qq_N)$ is an expansion of $\pp=(\pp_1, \ldots \pp_N)$ in $\EE^d$. Then  the family of functions $\ff_i:[0,1] \longrightarrow \EE^{2d}, \quad i=1, \ldots, N$, given by
$$\ff_{i}(t)=\left(\frac{\pp_{i}+\qq_{i}}{2}+(cos\pi t)\frac{\pp_{i}-\qq_{i}}{2}, \,(sin\pi t)\frac{\pp_{i}-\qq_{i}}{2}\right)$$
is a continuous expansion from $\pp$ to $\qq$ in $\EE^{2d}$.
\end{thm}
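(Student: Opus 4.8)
The statement to prove is Theorem~\ref{thm:continuous}: given that $\qq$ is an expansion of $\pp$ in $\EE^d$, the explicit family
$$\ff_i(t)=\left(\frac{\pp_i+\qq_i}{2}+(\cos\pi t)\frac{\pp_i-\qq_i}{2},\,(\sin\pi t)\frac{\pp_i-\qq_i}{2}\right)$$
is a continuous expansion from $\pp$ to $\qq$ in $\EE^{2d}$.

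The plan is to simply verify conditions (1) and (2) of the definition of a continuous expansion directly from the formula. Continuity is clear since each coordinate of $\ff_i$ is built from $\cos\pi t$, $\sin\pi t$ and constant vectors. For the endpoint conditions (1): at $t=0$ we have $\cos 0=1$, $\sin 0=0$, so $\ff_i(0)=\left(\tfrac{\pp_i+\qq_i}{2}+\tfrac{\pp_i-\qq_i}{2},\,0\right)=(\pp_i,0)=\iota(\pp_i)$; at $t=1$ we have $\cos\pi=-1$, $\sin\pi=0$, so $\ff_i(1)=\left(\tfrac{\pp_i+\qq_i}{2}-\tfrac{\pp_i-\qq_i}{2},\,0\right)=(\qq_i,0)=\iota(\qq_i)$.

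The substantive step is the monotonicity condition (2). First I would compute $|\ff_i(t)-\ff_j(t)|^2$ as a function of $t$. Writing $\aa=\pp_i-\pp_j$, $\bb=\qq_i-\qq_j$, and using \eqref{eqn:norms} to split the norm into its $\pi_1$ and $\pi_2$ parts, the first component of $\ff_i(t)-\ff_j(t)$ is $\tfrac{\aa+\bb}{2}+(\cos\pi t)\tfrac{\aa-\bb}{2}$ and the second is $(\sin\pi t)\tfrac{\aa-\bb}{2}$. Expanding,
$$|\ff_i(t)-\ff_j(t)|^2=\left|\tfrac{\aa+\bb}{2}\right|^2+\cos^2\pi t\left|\tfrac{\aa-\bb}{2}\right|^2+2\cos\pi t\,\tfrac{\aa+\bb}{2}\cdot\tfrac{\aa-\bb}{2}+\sin^2\pi t\left|\tfrac{\aa-\bb}{2}\right|^2.$$
Using $\cos^2+\sin^2=1$ and $\tfrac{\aa+\bb}{2}\cdot\tfrac{\aa-\bb}{2}=\tfrac14(|\aa|^2-|\bb|^2)$, this collapses to
$$|\ff_i(t)-\ff_j(t)|^2=\left|\tfrac{\aa+\bb}{2}\right|^2+\left|\tfrac{\aa-\bb}{2}\right|^2+\tfrac{\cos\pi t}{2}\bigl(|\aa|^2-|\bb|^2\bigr)=\tfrac{|\aa|^2+|\bb|^2}{2}+\tfrac{\cos\pi t}{2}\bigl(|\aa|^2-|\bb|^2\bigr).$$
Now the hypothesis that $\qq$ is an expansion of $\pp$ gives $|\aa|=|\pp_i-\pp_j|\le|\qq_i-\qq_j|=|\bb|$, so $|\aa|^2-|\bb|^2\le 0$. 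Since $\cos\pi t$ is (weakly) decreasing on $[0,1]$, the quantity $\tfrac{\cos\pi t}{2}(|\aa|^2-|\bb|^2)$ is (weakly) increasing in $t$, hence $|\ff_i(t)-\ff_j(t)|^2$ is nondecreasing in $t$, which is exactly condition (2): for $t_1<t_2$, $|\ff_i(t_1)-\ff_j(t_1)|\le|\ff_i(t_2)-\ff_j(t_2)|$.

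There is essentially no obstacle here — the only thing to be careful about is bookkeeping in the expansion of the squared norm and correctly invoking \eqref{eqn:norms}/\eqref{eqn:dotprod} to handle the product space $\EE^{2d}=\EE^d\times\EE^d$; the sign of $|\aa|^2-|\bb|^2$ together with the monotonicity of $\cos\pi t$ on $[0,1]$ does all the work. One could also phrase the final step as: the distance at time $t$ interpolates monotonically between $|\aa|$ at $t=0$ and $|\bb|$ at $t=1$ precisely because $|\aa|\le|\bb|$.
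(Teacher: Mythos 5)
Your proof is correct and follows essentially the same route as the paper's: both expand $|\ff_i(t)-\ff_j(t)|^2$ and observe that the only $t$-dependence is the term $\tfrac{\cos\pi t}{2}\bigl(|\pp_i-\pp_j|^2-|\qq_i-\qq_j|^2\bigr)$, which is nondecreasing because the second factor is nonpositive. Your constant term $\tfrac{|\aa|^2+|\bb|^2}{2}$ is just the paper's $\tfrac14\bigl(|\aa-\bb|^2+|\aa+\bb|^2\bigr)$ rewritten via the parallelogram law, so the two computations are identical in substance.
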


\begin{proof}
We reproduce the proof here for completeness. Clearly, $\ff_i$ is continuous for $i=1, \ldots, N$ and $\ff_i(0)=\iota(\pp_i)$, $\ff_i(1)=\iota(\qq_i)$. Expanding,
$4| \ff_{i}(t)-\ff_{j}(t)|^{2}$
$$=|(\pp_{i}-\pp_{j})-(\qq_{i}-\qq_{j})|^{2}+|(\pp_{i}-\pp_{j})+(\qq_{i}-\qq_{j})|^{2}+2(\cos\pi t)(| \pp_{i}-\pp_{j}|^{2}-| \qq_{i}-\qq_{j}|^{2}).$$
Since $\qq$ is an expansion of $\pp$, $| \pp_{i}-\pp_{j}|^{2}-| \qq_{i}-\qq_{j}|^{2}\le0$
for all $ i \neq j$. Therefore $| \ff_{i}(t)-\ff_{j}(t)|$ is non-decreasing on $[0,1]$.
\end{proof}

Bezdek and Connelly  used the above in \cite{BezCon}, together with results of Csik\'os \cite{Csi} to prove the Kneser-Poulsen conjecture \cite{Kne} for the plane. More specifically, they showed that if there is a piecewise analytic expansion from $\pp$ to $\qq$ in dimension $d+2$, then the Kneser-Poulsen conjecture holds for balls centered at $\pp$ and $\qq$, that is, the volume of the union of the balls $B(\pp_i,r_i)$ is less than or equal to the volume of the union of the balls $B(\qq_i,r_i)$, where $r_i>0$. Similarly, the same method shows that the conjecture holds if the number of balls $N \le d+3$, generalizing a result of Gromov in \cite{Gro}. This raises the question, as pointed out in \cite{BezCon}, of whether it is possible to find continuous expansions in dimensions less than $2d$ for all expansions $\qq$ of $\pp$ in dimension $d$. If so, then the approach of Bezdek and Connelly can be applied to prove the Kneser-Poulsen conjecture in more general settings. Our main result is a negative answer to this question, specifically, we have:

\begin{thm}\label{thm:main}(Main Theorem)
There exists  configurations  $\pp=(\pp_1, \ldots \pp_N)$ in $\EE^d$ with expansions $\qq=(\qq_1, \ldots \qq_N)$, where $N=(d+1)^2$, which do not admit continuous expansions in dimensions less than $2d$.
\end{thm}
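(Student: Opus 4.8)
The plan is to produce the configurations $\pp,\qq$ explicitly from regular simplices, and then to obstruct continuous expansions below dimension $2d$ by a rigidity argument followed by a dimension count. I expect the real difficulty to be shared between \emph{designing} a sufficiently rigid example and carrying out that dimension count; by contrast the verification that $\qq$ is an expansion should be routine.

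For the construction I would fix a regular $d$-simplex with vertices $v_{0},\dots,v_{d}$ in $\EE^{d}$, centered at the origin with unit edge length (so $\sum_{k}v_{k}=0$), and build $\pp$ out of $d+1$ congruent copies of it — this is the source of the count $N=(d+1)^{2}$. Index the copies by $k=0,\dots,d$, write $\pp_{k,\ell}$ for the $\ell$-th vertex of the $k$-th copy, and place the copies relative to one another so that the whole configuration affinely spans $\EE^{d}$ (the precise placement is the subtle point: it has to make the eventual $\qq$ a genuine, non-isometric expansion while keeping the configuration rigid enough to defeat dimension $2d-1$). For $\qq$ I would keep each copy congruent to the original but apply to the $k$-th copy an \emph{orientation-reversing} symmetry $\sigma_{k}$ of the simplex, moving the copies relative to one another by just enough to turn $\qq$ into a genuine expansion; the $\sigma_{k}$ should be chosen, using the standard action of $S_{d+1}$ on $\EE^{d}$, to be ``mutually independent'' so that no one line transverse to $\EE^{d}$ can absorb the orientation reversal of all $d+1$ copies simultaneously. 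Granting such a choice, checking $|\pp_{k,\ell}-\pp_{k',\ell'}|\le|\qq_{k,\ell}-\qq_{k',\ell'}|$ for every pair is a finite computation with \eqref{eqn:dotprod}--\eqref{eqn:norms}: intra-copy distances are unchanged because each $\sigma_{k}$ is an isometry, and one checks directly that no inter-copy distance decreases.

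Now suppose toward a contradiction that $\ff_{k,\ell}\colon[0,1]\to\EE^{f}$ is a continuous expansion from $\pp$ to $\qq$ with $f\le 2d-1$. The first step is a rigidity observation from the definition: whenever $|\pp_{k,\ell}-\pp_{k',\ell'}|=|\qq_{k,\ell}-\qq_{k',\ell'}|$, condition (2) forces $t\mapsto|\ff_{k,\ell}(t)-\ff_{k',\ell'}(t)|$ to be constant, being non-decreasing with equal endpoint values. In particular every internal distance of every copy is tight, so copy $k$ is at all times congruent to the fixed simplex and hence moves by a continuous one-parameter family of isometries $g_{k}(t)=(A_{k}(t),c_{k}(t))$ of $\EE^{f}$ (continuity of the motion being standard since the copy affinely spans a $d$-flat); normalizing $g_{k}(0)=\mathrm{id}$, continuity and $A_{k}(0)=I$ give $A_{k}(t)\in SO(f)$ for all $t$, while $A_{k}(1)$ restricted to the $d$-dimensional direction space $V_{k}$ of copy $k$ equals $\sigma_{k}$, which has determinant $-1$ on $V_{k}$. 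This already forces $f>d$: if $f=d$ then $V_{k}=\EE^{d}=\EE^{f}$ and $A_{k}(1)\in SO(d)$ would have determinant $-1$, which is absurd.

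The heart of the proof is upgrading $f>d$ to $f\ge 2d$. Since $A_{k}(0)=I$ but $A_{k}(1)|_{V_{k}}$ reverses orientation, the moving $d$-frame of copy $k$ cannot remain inside $\EE^{d}\subset\EE^{f}$ for all $t$; at some times copy $k$ tilts out along a direction $\nu_{k}$ transverse to $\EE^{d}$. The crucial step — and the one I expect to be the main obstacle — is to show that distinct copies cannot recycle the same transverse directions, so that $\nu_{0},\dots,\nu_{d}$ span a subspace of dimension at least $d$. This is exactly where the symmetric layout and the independence of the $\sigma_{k}$ are used: if two copies tried to perform their orientation reversals inside a common transverse line (or, more generally, inside too small a transverse subspace), then some inter-copy squared distance $|\ff_{k,\ell}(t)-\ff_{k',\ell'}(t)|^{2}$ would be pushed below its value at $t=0$, contradicting monotonicity. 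The cleanest way to make this quantitative is to test the monotonicity of well-chosen \emph{sums} of squared inter-copy distances, which are automatically non-decreasing and, by the identities $\sum_{k}v_{k}=0$ and $\sum_{\ell}v_{\ell}v_{\ell}^{T}=cI$ ($c>0$ a constant), collapse to clean quadratic expressions in the $A_{k}(t)$ and $c_{k}(t)$; these should force the $\nu_{k}$ to span at least $d$ dimensions (there is one linear dependence among them, inherited from $\sum_{k}v_{k}=0$, which is why one gets $d$ and not $d+1$). Being linearly independent from the $d$ directions of the original $\EE^{d}$, these give $f\ge 2d$, the desired contradiction. Since Theorem~\ref{thm:continuous} realizes every expansion in dimension $2d$, this also shows that the bound $2d$ is optimal.
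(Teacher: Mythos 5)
Your proposal is a plan rather than a proof: the two places you yourself flag as ``the subtle point'' and ``the main obstacle'' are exactly the places where the mathematical content lives, and neither is carried out. First, the configuration is never specified. You say the $d+1$ copies must be placed ``so that the eventual $\qq$ is a genuine expansion while keeping the configuration rigid enough,'' but without an explicit placement there is nothing to verify: the expansion check is not ``routine'' in the abstract, and more importantly the rigidity argument has nothing to bite on, since it is precisely the \emph{equalities} among inter-copy distances (not just the inequalities) that constrain the motion. The paper's configuration is structurally different from yours: it is a single anchor simplex $\sigma_d$ (which is rigid and may be held fixed) together with $d+1$ ``flaps,'' each a copy of a face translated inward by $s\uu_i$ in $\pp$ and outward by $-s\uu_i$ in $\qq$. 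The equalities $|\uu_k-\bb^i_j|=|\uu_k-\cc^i_j|$ for $k\neq i$ make each flap vertex, two anchor vertices, and another flap vertex into a rectangle whose side lengths are all preserved; a rigidity lemma for parallelograms then forces every vertex of flap $k$ to undergo the \emph{same} displacement $\dd_k(t)$, orthogonal to $F^k$ and of constant norm $s$. Your $d+1$ free-floating congruent copies with ``mutually independent'' orientation-reversing symmetries have no such anchor, and it is not clear any placement of them yields both an expansion and the needed distance equalities.

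Second, the step from $f>d$ to $f\ge 2d$ is asserted, not proved: ``these should force the $\nu_k$ to span at least $d$ dimensions'' is the entire theorem. The paper's route is concrete and quite different from your proposed sums-of-squared-distances computation. Writing $\dd_k(t)=(a_k(t)\uu_k,\ww_k(t))\in\EE^d\times\EE^{f-d}$, the preserved congruences give $\dd_k(t)\cdot\dd_j(t)=-s^2/d$ for all $t$ and all $j\neq k$. The intermediate value theorem (since $a_k$ runs from $s$ to $-s$) produces a time $t_0$ at which one flap's displacement is purely transverse; the preserved dot products then force \emph{every} $\ww_j(t_0)$ to be nonzero, whence $|a_j(t_0)|<s$ for all $j$, whence $\ww_i(t_0)\cdot\ww_j(t_0)=-s^2/d+a_i(t_0)a_j(t_0)/d<0$ for all $i\neq j$. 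This yields $d+1$ pairwise obtuse vectors in $\EE^{f-d}$, and an elementary induction shows $\EE^n$ admits at most $n+1$ pairwise obtuse vectors, so $f-d\ge d$. Your heuristic about ``one linear dependence inherited from $\sum_k v_k=0$'' gestures at the same count, but without the IVT step, the preserved inner products, and the obtuse-vectors lemma there is no argument. As written, the proposal does not establish the theorem.
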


\noindent {\it Remark:} The example we construct is in fact the same as that constructed independently by Belk and Connelly in \cite{BelCon}, and in both cases, based on the example constructed in \cite{BezCon} for the planar case. However, our proof is more elementary and uses only basic linear algebra and some simple rigidity results. Indeed, our proof shows that away from the endpoints, any continuous expansion cannot be embedded into dimension less than $2d$ at any time $t \in (0,1)$.

\bigskip

The configurations $\pp$ and $\qq$ are built from the $(d+1)$
vertices $\vv_0, \ldots, \vv_d$ of the regular $d$-simplex $\sigma_d
\subset \EE^d$, together with the vertices  of the inward and
outward flaps associated to the faces of $\sigma_d$, specifically,
each face $F^i$ ($i=0. \ldots,d$) of $\sigma_d$ may be pushed
orthogonally towards or away from the center of $\sigma_d$ by a
distance $s>0$, to obtain flaps $F^i_{inw}$ and $F^i_{out}$
respectively (note that Belk and Connelly had a slightly different
definition for flaps in \cite{BelCon}). The configuration $\pp$
consists of the vertices of $\sigma_d$ and of the inward flaps
$F^i_{inw}$ and the configuration $\qq$ consists of the
corresponding vertices of $\sigma_d$ and of the outward flaps
$F^i_{out}$. Note that each flap has $d$ vertices so that $\pp$ and
$\qq$ consists of $(d+1)^2$ points. The rest of the paper will be
devoted to explaining this construction (\S \ref{s:simplex}),
showing that $\qq$ is an expansion of $\pp$ (\S \ref{s:expansion}),
and proving that there is no continuous expansion from $\pp$ to
$\qq$ in $\EE^f$ for $f <2d$ (\S \ref{s:proof}).

\vskip 6pt

\noindent {\it Acknowledgements.} This work arose from an undergraduate honors project of the third author under the
supervision of the first and second authors. The authors are grateful to Jean-Marc
Schlenker for helpful conversations, and also for bringing their attention to
\cite{BelCon} arising from his correspondence with R. Connelly.


\section{Regular simplices with flaps}\label{s:simplex}
Let $\sigma :=\sigma_d \subset \EE^d$ be the regular simplex with vertices $\uu_i$, $i=0, \ldots, d$ and center at the origin $O$ such  that $| \uu_i| =1$ for all $i$ (see figure 1). Then
\begin{equation}\label{eqn:innerproductofnorms}
   \uu_i \cdot \uu_j=-\frac{1}{d}, \quad i \neq j
\end{equation}

\begin{figure}[htbp]
    \centering
        \includegraphics[width=0.70\textwidth]{./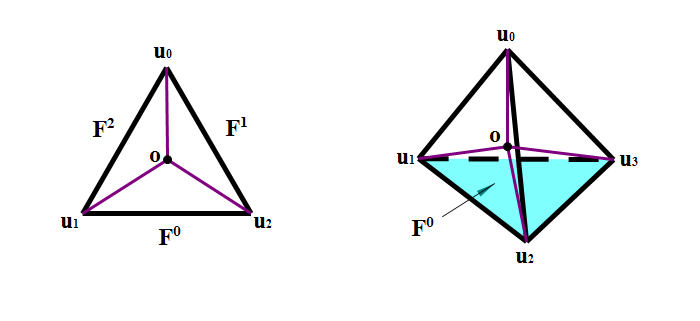}
    \caption{The simplex $\sigma_2$ and $\sigma_3$}
\end{figure}

see for example Coxeter \cite{Cox}, or Parks and Wills \cite{ParWil}
for an elementary proof. Denote by $F^i$ the face of $\sigma$ which
does not contain the vertex $\uu_i$. Then the norm of $F^i$, the
outward facing unit normal $\nn_i$ to $F^i$ is the vector $-\uu_i$.

Fix $s>0$. For each face $F^i$, $i=0, \ldots, d$, define the outward
$i$th flap of depth $s$ to be $F^i$ translated by $s\nn_i=-\uu_i$,
that is,
$$F^i_{out}:=F^i -s\uu_i.$$
Similarly, the inward $i$th flap of depth $s$ is given by
$$F^i_{inw}:=F^i +s\uu_i.$$
Each flap has $d$ vertices and if we denote the vertices of
$F^i_{out}$ by $\cc^i_j$ and those of $F^i_{inw}$ by $\bb^i_j$,
where $j\neq i$ (see figure 2 for the case when $d=2$ and $3$), then
we have, for $i,j\in\{0, \ldots, d\}$, $i \neq j$,
\begin{eqnarray}
  \cc^i_j &=& \uu_j+s\nn_i = \uu_j-s\uu_i \\
  \bb^i_j &=& \uu_j-s\nn_i = \uu_j+s\uu_i
\end{eqnarray}

\begin{figure}[htbp]
    \centering
        \includegraphics[width=0.80\textwidth]{./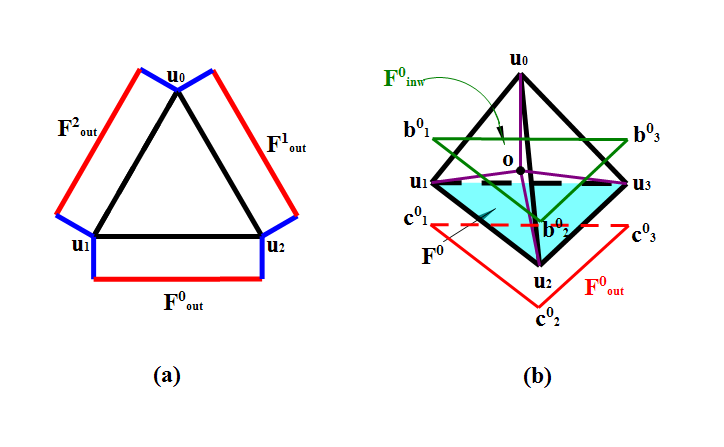}
    \caption{(a) $\sigma_2$ with outward flaps  (b) $\sigma_3$ with the inward and outward flaps $F^0_{inw}$ and $F^0_{out}$}
\end{figure}

 The configurations $\pp$ and $\qq$ we are interested in consists of the vertices of the regular simplex with inward and outward flaps respectively, defined by
\begin{equation}\label{eqn:pandq}
   \pp=\{\uu_i\} \cup \{\bb^i_j\}, \quad \qq =\{\uu_i\} \cup \{\cc^i_j\}, \quad i,j \in \{0,1,\ldots,d\},\quad i \neq j
\end{equation}
where $\pp$ and $\qq$ are ordered so that the  correspondence between the elements from the indexing is preserved. We have:

\begin{thm}\label{thm:main2}
Suppose that $\pp$ and $\qq$ are configurations in $\EE^d$ consisting of the vertices of the regular simplex with inward and outward flaps defined as in (\ref{eqn:pandq}). Then
\begin{enumerate}
  \item [(a)] $\qq$ is an expansion of $\pp$ in $\EE^d$;
  \item [(b)] there does not exist a continuous expansion from $\pp$ to $\qq$ in $\EE^f$ for $f <2d$.
\end{enumerate}
\end{thm}
We will prove (a) in the next section and (b) in the following
section. We note that although (a) was claimed in \cite{BelCon}, no
proof was given, we give a proof here for completeness. Also our
proof of (b) is independent of, and more elementary than that given
in \cite{BelCon}.

\section{Proof that $\qq$ is an expansion of $\pp$}\label{s:expansion}

We only need to consider the distances between vertices on $\sigma$ and vertices on the flaps, or between vertices on the flaps.
In the first case, we have $$| \uu_k- \bb^i_j| =| \uu_k- \cc^i_j|, \quad \hbox{if}\quad k \neq i,$$ since $\uu_k \subset F^i$, and
$$| \uu_i- \bb^i_j| < | \uu_i- \cc^i_j|$$  since by reflecting on the face $F^i$, we see there is a broken path from $\uu_i$
to $\bb^i_j$ of length $| \uu_k- \cc^i_j|$. The argument works if we
replace $\sigma$ by any simplex.

In the second case, we have, for $i \neq j$, $k \neq l$,
\begin{eqnarray*}
  | \bb_j^i-\bb_l^k|^2 &=& | (\uu_j+s\uu_i)-(\uu_l+s\uu_k)  |^2 \\
  ~ &=& | \uu_j-\uu_l |^2+2s(\uu_j-\uu_l)\cdot(\uu_i-\uu_k)+s^2| \uu_i-\uu_k |^2 \\
   | \cc_j^i-\cc_l^k|^2 &=& | \uu_j-\uu_l |^2-2s(\uu_j-\uu_l)\cdot(\uu_i-\uu_k)+s^2| \uu_i-\uu_k |^2 \\
  \Longrightarrow \quad | \bb_j^i-\bb_l^k|^2 &-& | \cc_j^i-\cc_l^k|^2=  4s(\uu_j-\uu_l)\cdot(\uu_i-\uu_k)
\end{eqnarray*}
If $i=k$, or $j=l$, or $i,j,k,l$ are all distinct, then
$4s(\uu_j-\uu_l)\cdot(\uu_i-\uu_k)=0$ by
(\ref{eqn:innerproductofnorms}) so that $$| \bb_j^i-\bb_l^k|= |
\cc_j^i-\cc_l^k|.$$
If $i=l$ or $j=k$, then
$$4s(\uu_j-\uu_l)\cdot(\uu_i-\uu_k)=4s(\frac{1}{d}-1)<0$$ by (\ref{eqn:innerproductofnorms}), hence in all cases,
$$| \bb_j^i-\bb_l^k|\leq  | \cc_j^i-\cc_l^k|.$$ \qed

\noindent {\it Remark:} In the case where we start with any simplex
instead of $\sigma_d$, then $$| \bb_j^i-\bb_l^k|^2 - |
\cc_j^i-\cc_l^k|^2= 4s(\uu_j-\uu_l)\cdot(\nn_k-\nn_i).$$ Again, if
$i=k$, or $j=l$, or $i,j,k,l$ are all distinct, then
$4s(\uu_j-\uu_l)\cdot(\nn_k-\nn_i)=0$, and if $i=l$ or $j=k$, then
$4s(\uu_j-\uu_l)\cdot(\nn_k-\nn_i)<0$, so Theorem \ref{thm:main2}(a)
holds if we replace the regular simplex with any simplex.

\section{ Proof that there is no continuous expansion in dimension $<2d$}\label{s:proof}
The main tools we use are some basic linear algebra as described in \S \ref{s:intro}, and the fact that the configurations $\pp$ and $\qq$ contain several sub-configurations which are rigid under continuous expansion since the pair-wise distances are preserved in the sub-configurations. We first outline the strategy of our proof, note that it suffices to show that there is no continuous expansion in dimension $2d-1$.
\begin{enumerate}
    \item[(I)] We will assume for a contradiction that there exists a continuous expansion from $\pp$ to $\qq$ in $\EE^{2d-1}$;
  \item [(II)] we construct for each face $F^k$ a displacement vector function $$\dd_k:[0,1] \longrightarrow \EE^{2d-1} \cong \EE^d \times \EE^{d-1};$$
  such  that $\dd_k(t)$ is orthogonal to $F^k$ and $| \dd_k(t)| =s$ for all $t \in [0,1]$;
  \item [(III)] show that there is some $t_0 \in [0,1]$ such that the projection $\pi_2(\dd_k(t_0))$ to $\EE^{d-1}$ is non-zero for all $k\in \{0,1, \ldots,d\}$;
  \item [(IV)] show that the set $\{\ww_k=\pi_2(\dd_k(t_0))\}\subset \EE^{d-1}$ consists of pairwise obtuse vectors;
  \item [(V)] show that this is not possible to give the required contradiction.
\end{enumerate}

\vskip 15pt

\noindent (I) Consider $\EE^{2d-1}\cong \EE^d \times \EE^{d-1}$  and define the projections $\pi_1:\EE^{2d-1} \rightarrow \EE^d$ and $\pi_2:\EE^{2d-1} \rightarrow \EE^{d-1}$ and the inclusion $\iota: \EE^d \rightarrow \EE^{2d-1}$  as in \S \ref{s:intro}.

Suppose that there is a continuous expansion from $\pp$ to $\qq$ in $\EE^{2d-1} \cong \EE^d \times \EE^{d-1}$. Let $\ff_k,~~ \gg_j^i:[0,1]\rightarrow \EE^{2d-1}$, $i,j,k \in \{0, \ldots, d\}$, $i \neq j$,  be the continuous motions of $\uu_k$ and $\bb_j^i$ respectively which define the continuous expansion from $\pp$ to $\qq$. Since $\sigma_d$ is rigid, we may assume without loss of generality that $\uu_k$ remains stationary throughout the motion, that is
\begin{equation}\label{eqn:ffkt}
   \ff_k(t) \equiv \iota(\uu_k), \quad k=0, \ldots, d.
\end{equation}
We also have
\begin{equation}\label{eqn:ggji}
    \gg_j^i(0)=\iota(\bb_j^i)=\iota(\uu_j+s\uu_i), ~~ \gg_j^i(1)=\iota(\cc_j^i)=\iota(\uu_j-s\uu_i).
\end{equation}

\noindent (II) We will need the following:

\begin{prop}\label{prop:parallelogram}
Suppose that $(\uu_1,\uu_2, \uu_3,\uu_4), (\vv_1,\vv_2,\vv_3,\vv_4) \subset \EE^n$ are configurations such that
\begin{equation}\label{eqn:parallelogram}
   |\uu_i-\uu_j|=|\vv_i-\vv_j| \quad \hbox{ for all} \quad i \neq j.
\end{equation}
 If $(\uu_1,\uu_2, \uu_3,\uu_4)$ is a parallelogram, then $(\vv_1,\vv_2,\vv_3,\vv_4)$ is also a parallelogram  and $(\vv_1, \vv_2, \vv_3, \vv_4)\cong (\uu_1,\uu_2, \uu_3,\uu_4)$.
\end{prop}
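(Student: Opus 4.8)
\textbf{Proof proposal for Proposition \ref{prop:parallelogram}.}

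The plan is to reduce everything to the elementary fact that a finite point configuration in Euclidean space is determined up to congruence by its matrix of pairwise distances (equivalently, by its Gram matrix after fixing a base point). First I would translate both configurations so that $\uu_1 = \vv_1 = O$. Then for $i,j \in \{2,3,4\}$ the polarization identity
$$\uu_i \cdot \uu_j = \tfrac12\left(|\uu_i|^2 + |\uu_j|^2 - |\uu_i-\uu_j|^2\right)$$
shows that the Gram matrix $G = (\uu_i \cdot \uu_j)_{i,j=2}^4$ is computed purely from the numbers $|\uu_i - \uu_j|$ (including $|\uu_i - \uu_1| = |\uu_i|$). By hypothesis \eqref{eqn:parallelogram} these numbers are the same for the $\vv$'s, so the $\vv_i$ have the same Gram matrix. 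A configuration with prescribed Gram matrix relative to a base point is unique up to an orthogonal transformation fixing that point; hence there is an isometry of $\EE^n$ carrying $(\uu_1,\uu_2,\uu_3,\uu_4)$ to $(\vv_1,\vv_2,\vv_3,\vv_4)$, which gives $(\vv_1,\vv_2,\vv_3,\vv_4) \cong (\uu_1,\uu_2,\uu_3,\uu_4)$.

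It then remains to observe that being a parallelogram is a congruence invariant, so once the congruence is established the second configuration is automatically a parallelogram too. Concretely, $(\uu_1,\uu_2,\uu_3,\uu_4)$ being a parallelogram means $\uu_1 - \uu_2 = \uu_4 - \uu_3$ (with the vertices in cyclic order), and applying the isometry — which is an affine map preserving differences up to a linear orthogonal part — preserves this vector identity, so $\vv_1 - \vv_2 = \vv_4 - \vv_3$. Alternatively, and perhaps cleaner for the write-up, one can characterize a parallelogram among four points purely by distance relations: $(\uu_1,\uu_2,\uu_3,\uu_4)$ in cyclic order is a parallelogram if and only if $|\uu_1 - \uu_2| = |\uu_3 - \uu_4|$, $|\uu_2 - \uu_3| = |\uu_1 - \uu_4|$, and the parallelogram law $|\uu_1-\uu_3|^2 + |\uu_2-\uu_4|^2 = 2|\uu_1-\uu_2|^2 + 2|\uu_2-\uu_3|^2$ holds; since all of these are preserved by \eqref{eqn:parallelogram}, the conclusion is immediate and one does not even need to invoke the congruence for this part.

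The only mild subtlety — and the step I would be most careful about — is making sure the "same Gram matrix implies congruent" statement is invoked correctly when the ambient dimension $n$ exceeds the affine dimension spanned by the configuration: the isometry should be taken as an element of the full isometry group of $\EE^n$ (orthogonal part times translation), not merely a map between the affine hulls, so that it is literally a congruence of $\EE^n$ restricting appropriately. This is standard but worth stating explicitly since later applications embed these configurations in spaces of varying dimension. Everything here is routine linear algebra; there is no real obstacle, and the argument via the distance characterization of parallelograms in the last paragraph is probably the most self-contained way to present it.
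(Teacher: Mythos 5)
Your proof is correct, but it takes a genuinely different route from the paper's. The paper argues synthetically: it uses congruent triangles to transfer the lengths of the medians to the midpoints $\ww$, $\ww'$ of the diagonals $(\uu_2,\uu_4)$ and $(\vv_2,\vv_4)$, and then exploits the equality case of the triangle inequality in $|\vv_3-\vv_1|=|\vv_3-\ww'|+|\ww'-\vv_1|$ to force $\vv_1,\ww',\vv_3$ to be collinear, whence the diagonals of the $\vv$-configuration bisect each other. Your Gram-matrix argument instead proves the stronger and more general fact that \emph{any} two finite configurations in $\EE^n$ with equal pairwise distances are congruent via an isometry of $\EE^n$ --- the parallelogram hypothesis is not needed for the congruence at all, and the parallelogram property of $(\vv_1,\vv_2,\vv_3,\vv_4)$ then comes for free since isometries are affine and preserve the relation $\uu_1-\uu_2=\uu_4-\uu_3$. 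You correctly flag the one point of care (extending the linear isometry between the spans to an orthogonal map of all of $\EE^n$ when the configuration does not span). What your approach buys is generality and brevity; what the paper's buys is consistency with its advertised goal of using only completely elementary tools, avoiding even the rank/extension argument for isometries. Your alternative purely metric characterization of parallelograms is also valid, though you assert it without proof: writing $a=\uu_1-\uu_2$, $b=\uu_2-\uu_3$, $c=\uu_3-\uu_4$, $d=\uu_4-\uu_1$, the conditions $|a|=|c|$ and the parallelogram law give $b\cdot(a+c)=0$, and combining with $|b|=|d|=|a+b+c|$ yields $|a+c|^2=0$; a one-line verification along these lines should be included if you present that version, since $|a|=|c|$, $|b|=|d|$ alone are also satisfied by crossed (antiparallelogram) configurations.
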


\begin{proof} Let $\ww$ and $\ww'$ be the midpoints of $(\uu_2,\uu_4)$ and $(\vv_2,\vv_4)$ respectively. We have $\triangle(\uu_1,\uu_2, \uu_4)\cong \triangle(\vv_1,\vv_2, \vv_4)$, hence  $|\ww-\uu_1|=|\ww'-\vv_1|$ (see figure 3). Similarly, $\triangle(\uu_2,\uu_3, \uu_4)\cong \triangle(\vv_2,\vv_3, \vv_4)$, so $|\uu_3-\ww|=|\vv_3-\ww'|$. Also, by (\ref{eqn:parallelogram}) $|\vv_3-\vv_1|=|\uu_3-\uu_1|$  and  since $(\uu_1,\uu_2, \uu_3,\uu_4)$ is a parallelogram, $|\uu_3-\uu_1|=|\uu_3-\ww|+|\ww-\uu_1|$. Hence
$$|\vv_3-\vv_1|=|\uu_3-\uu_1|=|\uu_3-\ww|+|\ww-\uu_1|=|\vv_3-\ww'|+|\ww'-\vv_1|.$$

\begin{figure}[htbp]
    \centering
        \includegraphics[width=0.80\textwidth]{./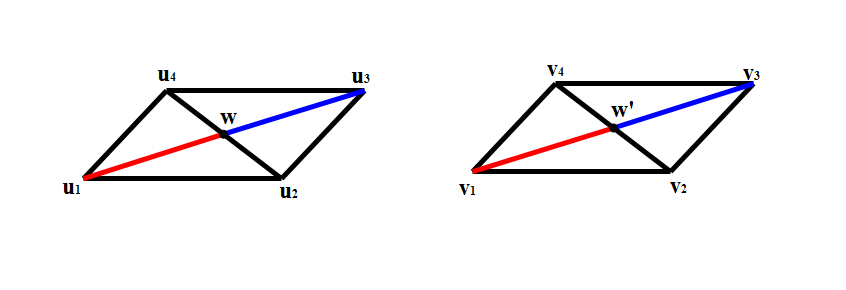}
    \caption{The configurations ($\uu_1$,$\uu_2$,$\uu_3$,$\uu_4$) and ($\vv_1$,$\vv_2$,$\vv_3$,$\vv_4$)}
    \label{fig:figure_3}
\end{figure}

Hence, $\vv_1, \ww'$ and $\vv_3$ are collinear, and  $(\vv_1, \vv_2, \vv_3, \vv_4)$ lies on a plane with the diagonal from $\vv_1$ to $\vv_3$ bisecting the diagonal from $\vv_2$ to $\vv_4$. A similar argument shows that the diagonal from $\vv_2$ to $\vv_4$ bisects the diagonal from $\vv_1$ to $\vv_3$, so that $(\vv_1, \vv_2, \vv_3, \vv_4)$ is a parallelogram. Now (\ref{eqn:parallelogram}) implies  $(\vv_1, \vv_2, \vv_3, \vv_4)\cong (\uu_1,\uu_2, \uu_3,\uu_4)$.

\end{proof}

\noindent Now, for distinct $i,j,k \in \{0, \ldots, d\}$, consider the continuous family of configurations $(\ff_i(t), \ff_j(t), \gg_j^k(t), \gg_i^k(t))$, $t \in [0,1]$. By assumption, this is a continuous expansion, but the pairwise distances between points in the initial configuration $$(\ff_i(0), \ff_j(0), \gg_j^k(0), \gg_i^k(0))=((\iota(\uu_i), \iota(\uu_j), \iota(\uu_j+s\uu_k), \iota(\uu_i+s\uu_k))$$ and those of the final configuration $$(\ff_i(1), \ff_j(1), \gg_j^k(1), \gg_i^k(1))=((\iota(\uu_i), \iota(\uu_j), \iota(\uu_j-s\uu_k), \iota(\uu_i-s\uu_k))$$ are equal since they form congruent rectangles. Since the initial configuration describes a rectangle, it follows from proposition \ref{prop:parallelogram} that all intermediate configurations are congruent rectangles. Hence,
$$\gg_j^k(t)-\iota(\uu_j)=\gg_i^k(t)-\iota(\uu_i), \quad \forall~~ i \neq j \neq k \neq i.$$
We can define $\dd_k(t): [0,1] \rightarrow \EE^{2d-1}$ by
$$\dd_k(t):=\gg_j^k(t)-\iota(\uu_j), \quad \hbox{for any} \quad j \neq k,$$ then
$$|\dd_k(t)|=|\gg_j^k(t)-\iota(\uu_j)|=|\gg_j^k(0)-\iota(\uu_j)|=s$$
and $\dd_k(t) \cdot (\iota(\uu_j-\uu_i))=0$ for all $i \neq j \neq k \neq i$, hence, $\dd_k(t)$ is orthogonal to $\iota(F^k)$, since $\{(\iota(\uu_j-\uu_i))\}$, $i \neq j \neq k \neq i$ spans $\iota(F^k)$.
\bigskip

\bigskip

\noindent (III) For $k=0, \ldots, d$, let $$\pi_1(\dd_k(t)):=\vv_k(t)\in \EE^d, \quad \pi_2(\dd_k(t)):=\ww_k(t) \in \EE^{d-1}$$ so that $\dd_k(t)=(\vv_k(t), \ww_k(t))$. Since $\dd_k(t).\iota(\uu_i-\uu_j)=\vv_k(t).(\uu_i-\uu_j)=0$ for all $i \neq j \neq k \neq i$, $\vv_k(t)$ is orthogonal to $F^k \subset \EE^d$,  so  $\vv_k(t)=a_k(t)\uu_k$, $a_k(t) \in \RR$, and furthermore, $|a_k(t)| \le s$ since $|\vv_k(t)|^2+|\ww_k(t)|^2=|\dd_k(t)|^2=s^2$ by (\ref{eqn:norms}). By the intermediate value theorem, since $a_k(0)=s$ and $a_k(1)=-s$, $a_k(t)$ takes all values in $[-s,s]$, so in particular, there exists some $t_0 \in [0,1]$ such that $a_k(t_0)=0$ , so that $\vv_k(t_0)={\mathbf 0}$.
Hence $|\ww_k(t_0)|^2=s^2$, in particular, $\ww_k(t_0) \neq {\mathbf 0}$ (in fact, we only need that $|a_k(t_0)|<s$ to get $\ww_k(t_0) \neq {\mathbf 0}$).

\medskip
Now for $i \neq j \neq k \neq i$, we have $\triangle(\iota(\uu_i),\gg_i^j(0),\gg_i^k(0)) \cong \triangle(\iota(\uu_i),\gg_i^j(1),\gg_i^k(1))$ since
$$\gg_i^j(0)-\iota(\uu_i)=\iota(s\uu_j), \quad \gg_i^k(0)-\iota(\uu_i)=\iota(s\uu_k),$$
$$ \gg_i^j(1)-\iota(\uu_i)=\iota(-s\uu_j), \quad \gg_i^k(1)-\iota(\uu_i)=\iota(-s\uu_k),$$
so all the triangles $ \triangle (\iota(\uu_i),\gg_i^j(t),\gg_i^k(t))$, $t \in [0,1]$ are congruent. In particular,
\begin{equation}\label{eqn:dkdotdj}
    (\gg_i^k(t)-\iota(\uu_i))\cdot (\gg_i^j(t)-\iota(\uu_i))=\dd_k(t) \cdot \dd_j(t)=\dd_k(0) \cdot \dd_j(0)=s\uu_k \cdot s\uu_j=-\frac{s^2}{d}
\end{equation}
 for all $t \in [0,1]$ by (\ref{eqn:innerproductofnorms}).
Now using $\vv_k(t_0)={\mathbf 0}$ and applying  (\ref{eqn:dotprod}) to (\ref{eqn:dkdotdj}) gives,
\begin{equation}\label{eqn:wkdotwj}
    -\frac{s^2}{d}=\dd_k(t_0)\cdot \dd_j(t_0)=\vv_k(t_0) \cdot \vv_j(t_0)+\ww_k(t_0)\cdot \ww_j(t_0)=\ww_k(t_0)\cdot \ww_j(t_0)
\end{equation} for all $j \neq k$. In particular, we see that $\ww_j(t_0) \neq {\mathbf 0}$ for all $j=0, \ldots, d$ (again, we really only need that $|a_k(t_0)|<s$ to obtain this conclusion).

\bigskip

\noindent (IV) We need to show that $\ww_i(t_0)\cdot \ww_j(t_0)<0$ for all distinct $i, j \in \{0, \ldots, d\}$.
Recall that $\dd_i(t)=(\vv_i(t), \ww_i(t))=(a_i(t)\uu_i, \ww_i(t))$. Since $\ww_i(t_0) \neq {\mathbf 0}$ and by (\ref{eqn:norms}) $$s^2=|\dd_i(t_0)|^2=|\vv_i(t_0)|^2+|\ww_i(t_0)|^2=|a_i(t_0)|^2+|\ww_i(t_0)|^2$$
we have
\begin{equation}\label{eqn:aitlessthans}
    -s< a_i(t_0)<s, \quad \hbox{for all} \quad i=0, \ldots,d.
\end{equation}

 Now by (\ref{eqn:innerproductofnorms}), for $i \neq j$,
$$\dd_i(t_0) \cdot \dd_j(t_0)=\vv_i(t_0) \cdot \vv_j(t_0) +\ww_i(t_0) \cdot \ww_j(t_0).$$
$\dd_i(t_0) \cdot \dd_j(t_0)= \dd_i(0)\cdot \dd_j(0)=-\frac{s^2}{d}$ and $$\vv_i(t_0) \cdot \vv_j(t_0)=a_i(t_0)a_j(t_0)\uu_i\cdot \uu_j=-\frac{a_i(t_0)a_j(t_0)}{d},$$ where by (\ref{eqn:aitlessthans}), $|\vv_i(t_0) \cdot \vv_j(t_0)|< \frac{s^2}{d}$. It follows that $\ww_i(t_0)\cdot \ww_j(t_0)<0$ for all distinct $i, j \in \{0,\ldots, d\}$.

\bigskip

\noindent {\it Remark:} In proving the conclusion in (IV) holds, we
only really require that the outward normals $\nn_i$, $i=0, \ldots,
d$ of $\sigma_d$ are pairwise obtuse, that is, $\nn_i \cdot \nn_j<0$
for all distinct $i,j \in \{0, \ldots, d\}$. Hence we may replace
the regular simplex with one for which the above holds.

\bigskip

\noindent (V) Recall that $\uu_1, \uu_2 \in \EE^n$ are obtuse if $\uu_1 \cdot \uu_2 <0$. The lemma below states that we cannot have a collection of $n+2$  pairwise  obtuse vectors in $\EE^n$.
\begin{lem}\label{lem:obtuse} For any set $\{\uu_1, \ldots, \uu_{n+2}\}$ of $n+2$ vectors in $\EE^n$, $\uu_i\cdot \uu_j \ge 0$ for some $i \neq j$, that is, the vectors cannot be all pairwise obtuse.

\end{lem}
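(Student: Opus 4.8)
The plan is to prove Lemma \ref{lem:obtuse} by induction on $n$, the key idea being that if one vector is fixed, projecting the others onto its orthogonal complement roughly preserves obtuseness and reduces the dimension by one. First I would dispose of the base case $n=1$: three nonzero reals cannot be pairwise of opposite sign (by pigeonhole two of them have the same sign, or one is zero, so their product is $\ge 0$). If some $\uu_i = {\mathbf 0}$ we are done immediately, so assume all vectors are nonzero.

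For the inductive step, suppose the result holds in $\EE^{n-1}$ and let $\{\uu_1, \ldots, \uu_{n+2}\} \subset \EE^n$ be pairwise obtuse, aiming for a contradiction. Fix $\uu_{n+2}$ and write each $\uu_i = \lambda_i \uu_{n+2} + \uu_i'$ for $i = 1, \ldots, n+1$, where $\uu_i'$ is the component of $\uu_i$ orthogonal to $\uu_{n+2}$ and $\lambda_i = (\uu_i \cdot \uu_{n+2})/|\uu_{n+2}|^2 < 0$ since $\uu_i$ and $\uu_{n+2}$ are obtuse. The vectors $\uu_1', \ldots, \uu_{n+1}'$ lie in the $(n-1)$-dimensional subspace $\uu_{n+2}^\perp$. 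The crucial computation is that for $i \neq j$ (both in $\{1, \ldots, n+1\}$),
$$\uu_i' \cdot \uu_j' = \uu_i \cdot \uu_j - \lambda_i \lambda_j |\uu_{n+2}|^2 < 0,$$
because $\uu_i \cdot \uu_j < 0$ by hypothesis and $\lambda_i \lambda_j |\uu_{n+2}|^2 > 0$ (product of two negatives times a positive norm-squared). So $\uu_1', \ldots, \uu_{n+1}'$ are $n+1$ pairwise obtuse vectors in an $(n-1)$-dimensional space; since $n+1 = (n-1)+2$, this contradicts the inductive hypothesis.

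The main subtlety to guard against is the degenerate possibility that some $\uu_i'$ is the zero vector, i.e., that $\uu_i$ is a negative multiple of $\uu_{n+2}$. In that case, for any other $\uu_j$ with $j \notin \{i, n+2\}$ we would need both $\uu_j \cdot \uu_i < 0$ and $\uu_j \cdot \uu_{n+2} < 0$; but $\uu_i = \lambda_i \uu_{n+2}$ with $\lambda_i < 0$ forces $\uu_j \cdot \uu_i = \lambda_i (\uu_j \cdot \uu_{n+2}) > 0$, a direct contradiction (and this only needs $n+2 \ge 3$, i.e. $n \ge 1$, so there is such a $\uu_j$). Hence in fact all the $\uu_i'$ are automatically nonzero, and the inductive step goes through cleanly. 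This collinear case is the only real obstacle, and it is handled in a line; the rest is the bookkeeping of the projection identity above.
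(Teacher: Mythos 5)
Your proof is correct and follows essentially the same induction as the paper: the paper normalizes the distinguished vector to $(-1,0,\ldots,0)$ and projects onto coordinate factors, while you decompose against a general $\uu_{n+2}$, but the key identity $\uu_i'\cdot\uu_j'=\uu_i\cdot\uu_j-\lambda_i\lambda_j|\uu_{n+2}|^2<0$ is exactly the paper's $\ww_i\cdot\ww_j=\uu_i\cdot\uu_j-\vv_i\vv_j<0$. (Your degenerate case is in fact automatic, since the lemma as stated already tolerates zero vectors in the projected family.)
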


\begin{proof} We prove by induction on the dimension $n$. The result is clearly true when $n=1$ since for any 3 vectors $\uu_1, \uu_2, \uu_3 \in \EE^1$, either at least one of the vectors is $\mathbf 0$, or two are in the same direction so have positive dot product. Assume the lemma is true for $n$ and suppose for a contradiction that there exists $\uu_1, \ldots, \uu_{n+3} \in \EE^{n+1}$ that are all pairwise obtuse. Without loss of generality, we may assume that none of $\uu_i$ are zero, and that $\uu_{n+3}=(-1,0, \ldots, 0)$.
Write $\EE^{n+1}\cong \EE^1 \times \EE^{n}$ and consider the projections $\pi_1:\EE^{n+1} \rightarrow \EE^1$ and $\pi_2:\EE^{n+1} \rightarrow \EE^n$  respectively as in \S \ref{s:intro}. For $i=1, \ldots, n+2$, let $\vv_i:=\pi_1(\uu_i) \in \EE^1 \cong \RR$, $\ww_i:=\pi_2(\uu_i) \in \EE^n$, see figure 4. Note that $\vv_i >0$ since $\uu_i \cdot \uu_{n+3}<0$, so $\vv_i \cdot \vv_j>0$ for $i,j \in \{1, \ldots, n+2\}$.
 Then we have, from (\ref{eqn:dotprod}), for distinct $i, j \in \{1, \ldots, n+2\}$,
  $$\uu_i \cdot \uu_j=\vv_i\cdot \vv_j +\ww_i \cdot \ww_j.$$

  \begin{figure}[htbp]
    \centering
        \includegraphics[width=1\textwidth]{./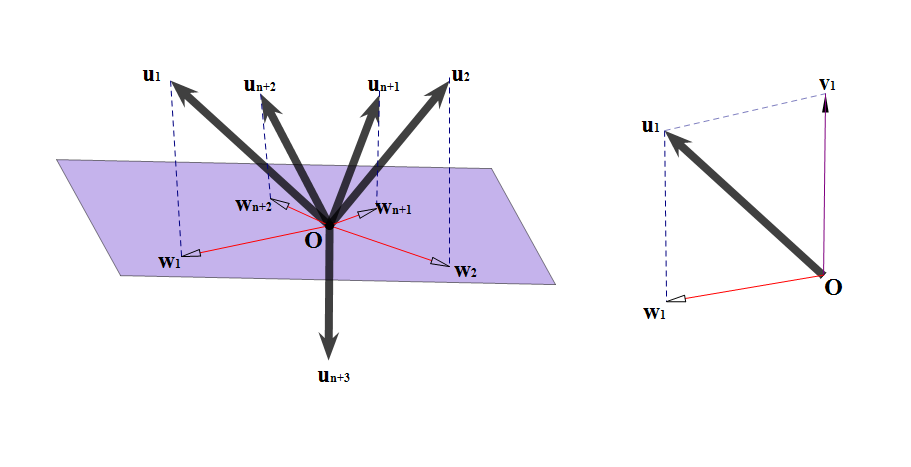}
    \caption{Projection of $\EE^{n+1}$ vectors into $\EE^{n}$ space }
    \label{fig:figure_4}
\end{figure}

 By assumption, $\uu_i \cdot \uu_j <0$, and $\vv_i\cdot \vv_j >0$ from the above, so
  $$\ww_i\cdot \ww_j <0.$$
  Hence $\{\ww_1, \ldots, \ww_{n+2}\}$ is a collection of pairwise obtuse vectors in $\EE^{n}$ contradicting the induction hypothesis.

\end{proof}

Applying  lemma \ref{lem:obtuse} to the set $\{\ww_0, \ww_1, \ldots, \ww_{d}\} \subset \EE^{d-1}$ in (IV) we get the required contradiction which concludes the proof of Theorem \ref{thm:main2} from which Theorem \ref{thm:main} follows. \qed

\bigskip

\noindent{\bf Concluding remarks.} The method of proof above works
if we construct $\pp$ and $\qq$ from any simplex in $\EE^d$ whose
pairwise norms are obtuse. It also shows that any intermediate
configuration in a continuous expansion from $\pp$ to $\qq$ cannot
be embedded in a space of dimension less than $2d$.
An interesting open question is, for each $d$, what is  the smallest number of points in the configurations $\pp$ and $\qq$  for which there is no continuous expansion in $\EE^{2d-1}$. We have shown that $N=(d+1)^2$ suffices, but this may not be optimal. Finally, it is also interesting to ask if we can find configurations $\pp$, and expansions $\qq$ of $\pp$ such that the continuous expansion given by Theorem \ref{thm:continuous} is essentially, up to some trivial motions, the only continuous expansion in dimension $2d$.

\end{document}